\newtheorem{theorem}{Theorem}[section]
\newtheorem{definition}[theorem]{Definition}
 \newtheorem{lemma}{Lemma}[section]
 \newtheorem{corollary}{Corollary}[section]
\title{Order bornological spaces and order ultrabornological spaces}
\author{Liang Hong \\
Department of Mathematics \\
Robert Morris University   \\
Moon Township, PA 15108, USA\\
Email: hong@rmu.edu}
\begin{document}

\maketitle

\begin{abstract}
Ordered locally convex spaces is an important classes of spaces in the theory of ordered topological vector spaces just as locally convex spaces in the theory of topological vector spaces. Some special classes of ordered locally convex spaces such as order infrabarrelled spaces have been studied. The purpose of this paper is to initiate the study of order bornological spaces and their fundamental properties. In addition, order ultrabornological spaces is also investigated as an important special class of order bornological spaces.
\end{abstract}

\maketitle

\section{Notation and basic concepts}
For notation, terminology and standard results concerning topological vector spaces, we refer to \cite{Bourbaki1}, \cite{Horvath}, \cite{NB} and \cite{Schaefer}; for notation, terminology and standard results concerning Riesz spaces, we refer to \cite{AB2}, \cite{AB1}, \cite{LZ} and \cite{Zaanen}. A partially ordered set $X$ is called a \emph{lattice} if the infimum and supremum of any pair of elements in $X$ exist. A real vector space $X$ is called an \emph{ordered vector space} if its vector space structure is compatible with the order structure in a manner such that
\begin{enumerate}
  \item [(a)]if $x\leq y$, then $x+z\leq y+z$ for any $z\in X$;
  \item [(b)]if $x\leq y$, then $\lambda x\leq \lambda y$ for all $\lambda \geq 0$.
\end{enumerate}
An ordered vector space is called a \emph{Riesz space} (or a \emph{vector lattice}) if it is also a lattice at the same time. A vector subspace of a Riesz space is said to be a \emph{Riesz subspace} if it is closed under the lattice operation $\vee$. A subset $Y$ of a Riesz space $X$ is said to be \emph{solid} if $|x| \leq |y|$ and $y\in Y$ implies $x\in Y$.

Let $V$ be a vector space over a field $K$ and $A$ and $B$ be two subsets of $V$, $A$ is said to \emph{absorb} $B$ if there exists $\alpha>0$ such that $B\subset \lambda A$ for all $\lambda\in K$ such that $|\lambda|\geq \alpha$. The smallest balanced set containing $A$, denoted by $A_b$,  is called the \emph{balanced envelope/hull} of $A$; the smallest convex set containing $A$, denoted by $A_c$, is called the the \emph{convex envelope/hull} of $A$; the smallest balanced convex set containing $A$, denoted by $A_{bc}$, is called the \emph{balanced convex envelope/hull} of $A$.

A topology $\tau$ on a vector space $X$ over a field $K$ is called a \emph{linear topology} or \emph{vector topology} if the addition operation $(x, y)\mapsto x+y$ from $X\times X$ to $X$ and the scalar multiplication operation $(\lambda, x)\mapsto \lambda x$ from $K\times X$ to $X$ are both continuous. A\emph{ topological vector space} $(X, \tau)$ over a field $K$ is a vector space $X$ endowed with a vector topology $\tau$. Unless otherwise stated, all topological vector spaces are assumed to be over $R$. A topological vector space $X$ is said to be \emph{semi-complete} if every Cauchy sequence in $X$ is convergent. A subset $B$ of a topological vector space $(X, \tau)$ is said to be \emph{topologically bounded} or \emph{$\tau$-bounded} if it is absorbed by every neighborhood of zero; as pointed by \cite{Bourbaki1}, this is equivalent to saying for every neighborhood $V$ of zero there exists some $\lambda>0$ such that $\lambda B\subset V$. A topological vector space $(X, \tau)$ is said to be \emph{locally convex} if it has a neighborhood base at zero consisting of convex sets. A locally convex topological vector space is said to be \emph{seminormable} (\emph{normable}) if it can be generated by a single seminorm (norm). The \emph{finest locally convex topology} on a vector space $X$ is the collection of all absorbing, balanced and convex sets of $X$.
An \emph{ordered topological vector space} is an ordered vector space equipped with a compatible vector topology. Let $(X, \tau)$ be an ordered topological vector space. Then the family $\tau_b$ of all the balanced and convex subsets of $X$ each of which absorbs all order bounded subsets of $X$ is called the \emph{order-bound topology} or \emph{order topology} on $X$. An \emph{ordered locally convex space} is an ordered topological vector space that is locally convex at the same time. A \emph{topological Riesz space} is an ordered topological vector space which is a Riesz space at the same time. A topological Riesz space that is locally convex at the same time is called a \emph{locally convex Riesz space}. A vector topology $\tau$ on a Riesz space $X$ is said to be \emph{locally solid} if there exists a $\tau$-neighborhood base at zero consisting of solid sets. A \emph{locally solid Riesz space} is a Riesz space equipped with a locally solid vector topology. A seminorm $\rho$ on a Riesz space $X$ is called a \emph{Riesz seminorm} if $|x|\leq |y|$ implies $\rho(x)\leq \rho(y)$ for any two elements $x, y\in X$. A vector topology on a Riesz space is said to be \emph{locally convex-solid} if it is both locally solid and locally convex. A \emph{locally convex-solid Riesz space} is a Riesz space equipped with a locally convex-solid vector topology.
Following \cite{Wong1}, we say a locally convex-slid Riesz space $E$ is an \emph{order infrabarrelled Riesz space} if each order bornivorous barrel in $E$ is a neighborhood of zero.

All operators in this paper are assumed to be linear; therefore, an operator means a linear operator. Let $T$ be an operator between two ordered topological vector spaces $(E_1, \tau_1)$ and $(E_2, \tau_2)$.
$T$ is said to be a \emph{positive operator} if it carries positive elements to positive elements; it is said to be \emph{regular} if $T$ can be written as the difference of two positive operators; it is said to be  \emph{order bounded} if it carries order bounded sets to order bounded sets; it is said to be \emph{topologically bounded} if it maps $\tau_1$-bounded sets to $\tau_2$-bounded sets; it is said to be   \emph{topologically continuous} if $T^{-1}(O)\in \tau_1$ for every open set $O\in \tau_2$. A topologically continuous operator is topologically bounded.\\

\begin{theorem}[Theorem 2.19 of \cite{AB1}]\label{theorem2.1}
If $(E, \tau)$ is a locally solid Riesz space, then every order bounded subset of $E$ is $\tau$-bounded.
\end{theorem}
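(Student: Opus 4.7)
The plan is to unpack both hypotheses at a single point $u$ that dominates the set in absolute value, and then let solidity do all the work.

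First I would reformulate order boundedness in the form most convenient here: if $A \subset E$ is order bounded, there exists a positive element $u \in E_+$ with $|a| \le u$ for every $a \in A$ (take $u = x^- \vee y^+$ for an order interval $[x,y] \supset A$, or more directly $u = |x| \vee |y|$). Now fix an arbitrary $\tau$-neighborhood $V$ of zero. Since $\tau$ is locally solid, there is a solid $\tau$-neighborhood $W$ of zero with $W \subset V$.

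Next I would exploit the continuity of scalar multiplication at $(0,u)$: there exists $\lambda_0 > 0$ such that $\lambda_0 u \in W$. For any $a \in A$ we then have
\[
|\lambda_0 a| \;=\; \lambda_0 |a| \;\le\; \lambda_0 u,
\]
and since $W$ is solid with $\lambda_0 u \in W$, this forces $\lambda_0 a \in W \subset V$. Hence $A \subset \lambda_0^{-1} V$, which is the required absorption; letting $V$ vary shows $A$ is $\tau$-bounded.

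I do not expect a real obstacle: once one observes that solidity lets a single element $\lambda_0 u \in W$ pull an entire family of dominated elements into $W$, the proof is immediate. The only place where care is needed is the initial reduction to a single dominating positive element $u$, and the use of continuity of scalar multiplication (rather than, say, balancedness of $W$) to produce the scalar $\lambda_0$ — balancedness alone would not be enough since $W$ need not be a priori absorbing for arbitrary points, but the vector topology guarantees $W$ absorbs each individual point.
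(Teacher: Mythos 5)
Your proof is correct and is exactly the standard argument: dominate the order bounded set by a single positive element $u$, shrink a given neighborhood to a solid one, scale $u$ into it by continuity of scalar multiplication, and let solidity pull in every dominated element. Note that the paper itself states this result without proof, as a citation of Theorem 2.19 of Aliprantis--Burkinshaw, where essentially this same argument appears.
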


\begin{theorem}[Theorem 2.4 of \cite{Hong1}]\label{theorem2.2}
Let $(E, \tau)$ be an ordered topological vector space that has an order bounded $\tau$-neighborhood of zero. Then every $\tau$-bounded subset of $E$ is order bounded.
\end{theorem}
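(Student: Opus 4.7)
The plan is to chain two elementary inclusions: $B$ sits inside a scalar multiple of the distinguished neighborhood, which in turn sits inside a scalar multiple of an order interval.

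First I would fix an order bounded $\tau$-neighborhood $V$ of zero, so by definition there exist $a, b \in E$ with $V \subseteq [a, b]$. Given an arbitrary $\tau$-bounded set $B \subseteq E$, I would apply the characterization of topological boundedness quoted from Bourbaki in the preliminaries to produce some $\lambda > 0$ such that $\lambda B \subseteq V$; equivalently, $B \subseteq \lambda^{-1} V$. Write $\mu = \lambda^{-1} > 0$.

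Second, I would invoke axiom (b) of an ordered vector space: since $\mu > 0$, the map $x \mapsto \mu x$ preserves the order, so $\mu [a, b] = [\mu a, \mu b]$. Combining the inclusions gives
\[
B \;\subseteq\; \mu V \;\subseteq\; \mu [a, b] \;=\; [\mu a, \mu b],
\]
which exhibits $B$ as a subset of an order interval, hence $B$ is order bounded.

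There is no real obstacle: the statement is essentially a one-line consequence of unpacking the two definitions. The only point deserving care is to record that the scalar $\mu$ produced by absorption can be chosen strictly positive, so that axiom (b) legitimately yields $\mu[a,b] = [\mu a, \mu b]$ rather than reversing the interval.
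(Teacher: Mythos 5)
Your argument is correct: absorption by the order bounded neighborhood $V\subseteq[a,b]$ gives $B\subseteq\mu V\subseteq[\mu a,\mu b]$ for some $\mu>0$, and the care you take to keep $\mu$ positive so that the interval is not reversed is exactly the right point to flag. Note that the paper itself states this result without proof, importing it as Theorem 2.4 of the cited reference \cite{Hong1}, so there is no internal proof to compare against; your two-step unpacking of the definitions is the standard argument one would expect there.
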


\begin{theorem}
Let $(E, \tau)$ be an ordered topological vector space.
\begin{enumerate}
  \item [(i)]If $E$ is locally solid, its order topology $\tau_b$ is finer than $\tau$, that is, $\tau_b\subset \tau$. If in addition $E$ also has an order bounded $\tau$-neighborhood, then $\tau_b=\tau$.
  \item [(ii)]If $E$ is locally solid and $\tau_b\neq \tau$, then there exists an ordered locally
             convex space $(F, \tau')$ and an order bounded linear operator $T$ from $E$ to $F$ such that $T$ is not topologically continuous.
\end{enumerate}
\end{theorem}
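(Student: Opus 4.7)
The plan is to prove (i) by verifying the two inclusions $\tau\subset\tau_b$ (under the locally solid hypothesis) and $\tau_b\subset\tau$ (under the extra order bounded neighborhood hypothesis) separately, and then to derive (ii) by taking $F$ to be the underlying ordered vector space of $E$ endowed with the strictly finer topology $\tau_b$ and letting $T$ be the identity map. Throughout I will read the statement of (i) in the mathematically consistent direction, namely $\tau_b$ finer than $\tau$ meaning $\tau\subset\tau_b$, since otherwise the counterexample in (ii) cannot work.

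For the first direction in (i), I would fix an arbitrary $\tau$-neighborhood $U$ of zero and, using the locally convex structure that is the standing framework of the paper, pass to a balanced convex $\tau$-neighborhood $V\subset U$. By Theorem \ref{theorem2.1}, every order bounded subset of $E$ is $\tau$-bounded, and hence is absorbed by $V$. Thus $V$ is balanced, convex, and absorbs every order bounded set, which is precisely the defining condition for a basic $\tau_b$-neighborhood of zero. Consequently $U$ contains a $\tau_b$-neighborhood of zero and is therefore $\tau_b$-open, proving $\tau\subset\tau_b$. For the reverse inclusion under the additional hypothesis, let $W$ be an order bounded $\tau$-neighborhood of zero and let $V$ be an arbitrary basic $\tau_b$-neighborhood. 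Because $V$ absorbs $W$ there exists $\lambda>0$ with $\lambda W\subset V$; since $\lambda W$ is itself a $\tau$-neighborhood of zero, so is $V$. This yields $\tau_b\subset\tau$ and, combined with the first inclusion, the equality $\tau_b=\tau$.

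For (ii), set $F:=E$ as an ordered vector space but equip it with the topology $\tau':=\tau_b$. Since $\tau_b$ has by construction a neighborhood base at zero consisting of balanced convex absorbing sets, $(F,\tau')$ is an ordered locally convex space. Let $T\colon(E,\tau)\to(F,\tau')$ be the identity. It is order bounded in a trivial way, as it fixes each order interval. If $T$ were topologically continuous, then for every $\tau_b$-open set $O$ we would have $T^{-1}(O)=O\in\tau$, forcing $\tau_b\subset\tau$; together with the inclusion obtained in (i) this would give $\tau_b=\tau$, contrary to the hypothesis. Hence $T$ is not topologically continuous and serves as the required example.

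The main pressure point is the first inclusion in (i): the passage to a balanced \emph{convex} $\tau$-neighborhood requires the local convexity of $\tau$, which is only implicit in the "locally solid" hypothesis as literally defined. All the remaining steps are essentially formal applications of Theorem \ref{theorem2.1}, the defining property of $\tau_b$, and the observation that the identity is a natural order-bounded but topologically discontinuous map once $\tau$ is strictly coarser than $\tau_b$.
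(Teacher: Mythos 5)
Your proof is correct, and it takes a genuinely more self-contained route than the paper's. The paper disposes of (i) by citing Theorem~\ref{theorem2.1} together with Schaefer's characterization of the order-bound topology as the finest (locally convex) topology in which order bounded sets are topologically bounded, and disposes of (ii) by citing Proposition~2.2 of Wong; you instead unwind both facts directly. For (i), your observation that a balanced convex $\tau$-neighborhood of zero automatically absorbs every order bounded set (via Theorem~\ref{theorem2.1}) and is therefore itself a basic $\tau_b$-neighborhood is exactly the content of the Schaefer citation, and your reverse inclusion from an order bounded $\tau$-neighborhood is the correct and complete argument for the equality. For (ii), taking $F=(E,\tau_b)$ with the identity map is almost certainly the construction hiding behind Wong's proposition, and your contradiction argument is airtight given (i). Two points deserve affirmation: first, you are right to read ``finer'' as $\tau\subset\tau_b$ --- the paper's displayed inclusion ``$\tau_b\subset\tau$'' is inconsistent with the word ``finer'' and with part (ii), so your reading is the only tenable one; second, the pressure point you flag (needing a balanced \emph{convex} $\tau$-neighborhood, hence local convexity of $\tau$) is real, but it is equally present in the paper's own proof, since the cited result from Schaefer concerns locally convex topologies. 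Your version has the advantage of making both the implicit hypothesis and the witness $(F,\tau')$ explicit; the paper's has the advantage of brevity.
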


\begin{proof}
\begin{enumerate}
   \item [(i)]The statement follows from Theorem~\ref{theorem2.1} and the fact that order topology is the finest topology in which each order bounded set is topologically bounded (p. 230 of \cite{Schaefer}).
   \item [(ii)]The conclusion follows from (i) and Proposition 2.2 in \cite{Wong3}.
\end{enumerate}
\end{proof}

\section{Order bornological spaces}
Recall that a subset $B$ of a topological vector space $(E, \tau)$ is said to be \emph{bornivorous} if it absorbs all $\tau$-bounded sets in $E$. Below we define an analogous concept for ordered topological vector spaces.

\begin{definition}\label{definition3.1}
\emph{A set $B$ in an ordered topological vector space $(E, \tau)$ over a field $F$ is said to be \emph{order bornivorous} if it absorbs all order bounded sets in $E$, that is, for any order bounded set $D$ in $E$ there exists some $\alpha>0$ such that $D\subset \lambda B$ for all $\lambda \in F$ such that $|\lambda|\geq \alpha$.}
\end{definition}
\noindent \textbf{Remark 1.}
If $\tau$ is locally solid, then every order bornivorous set is bornivorous and hence absorbing; therefore, the gauge $g_B$ of an order bornivorous, balanced and convex set $B$ is a seminorm on $E$. If in addition $B$ is solid, then $g_B$ is seen to be a Riesz seminorm.\\

\noindent \textbf{Remark 2.} If $E$ has an order bounded $\tau$-neighborhood of zero, then every bornivorous set is order bornivorous. Thus, if $E$ has an order bounded $\tau$-neighborhood of zero, then every $\tau$-neighborhood of zero is order bornivorous.\\

\noindent \textbf{Remark 3.} A bornivorous set in an ordered topological vector space need not be order bornivorous, as the next example shows.\\

\noindent \textbf{Example 3.1.} Let $E=R^2$ and $\tau$ be the usual topology on $R^2$. Equip $E$ with the lexicographic ordering, that is, for two points $x=(x_1, x_2)$ and $y=(y_1, y_2)$ in $R^2$ we define $x\leq y$ if
and only if $x_1< y_1$ or else $x_1=y_1$ and $y_2\leq y_2$. It is evident that the closed unit ball $U$ absorbs any $\tau$-bounded sets; hence it is bornivorous. Let $x=(-1, 0)$ and $y=(1, 0)$. Consider the order interval $[x, y]$ which is obviously order bounded.  However, $U$ does not absorb $[x, y]$. Therefore, $U$ is not order bornivorous. \\

\noindent \textbf{Remark 4.} An order bornivorous set in an ordered topological vector space need not be bornivorous. To see this, consider the following example.\\

\noindent \textbf{Example 3.2.} Let $E=D[-\pi, \pi]$ be the space of functions defined on $[-\pi, \pi]$ with continuous first-order derivatives and $\tau$ be the norm topology generated by the sup norm $||x||_{\infty}=\sup_{-\pi\leq t \leq \pi}|x(t)|$. Then $\tau$ is locally convex.
Equip $E$ with the ordering defined as follows: for $x, y\in E$ we define $x\leq y$ if and only if $x(t)\leq y(t)$ and $x'(t)\leq y'(t)$ for all $t\in [-\pi, \pi]$, where $x'$ denotes the derivative of $x$ for $x\in E$. Consider the set $B=\{x\in E\mid x \leq 1\}$. Then $B$ is order bornivorous. However, $B$ is not bornivorous. To see this, we consider the $\tau$-bounded set $A=\{x\in E\mid ||x||_{\infty}\leq 1 \}$. Since $\{\sin kt\}_{k\in N}\in A$ and $\sup_{-\pi\leq t \leq \pi}\{(\sin kt)'\}=\infty$; $B$ cannot absorb $A$. Therefore, $U$ is order bornivorous but not bornivorous.

\begin{definition}\label{definition3.2}
\emph{An ordered locally convex space $(E, \tau)$ is said to be an \emph{order bornological space} if each order bornivorous, balanced and convex set is a neighborhood of zero. An order bornological space that is a Riesz space at the same time is called an \emph{order bornological Riesz space}. }
\end{definition}

\noindent \textbf{Remark.} In general, an order bornological Riesz space is different from  a \emph{bornological Riesz space} which is defined as a locally convex Riesz space that is bornological. The latter class of spaces were studied in \cite{Kawai} and \cite{Wong2}. Example 3.1 shows that there exists an order locally convex space that is bornivorous but not order bornological.

\begin{theorem}\label{theorem3.1}
Suppose $(E, \tau)$ is an ordered locally convex Riesz space. Then the following statements hold.
\begin{enumerate}
  \item [(i)]If $E$ is a normed Riesz space, then it is order bornological. In particular, every Banach lattice is order bornological.
  \item [(ii)]If $E$ is a locally convex-solid order bornological Riesz space, then it is order infrabarreled.
  \item [(iii)] If $E$ is a locally solid order bornological Riesz space, then it is bornological. Conversely, if $E$ is bornological and has an order bounded $\tau$-neighborhood of zero, then it is order bornological.
  \item [(iv)]If $E$ has an order bounded $\tau$-neighborhood of zero, then $E$ equipped with its finest locally convex topology is order bornological.
  \item [(v)] Let $\tau_{ob}$ denote the following collection of subsets of $E$:
            \begin{equation*}
            \quad \quad \quad \{B\in E\mid B \text{  is an order bornivorous, balanced and convex subset of $E$}\}.
            \end{equation*}
            Then $\tau_{ob}$ is a vector topology on $E$ and $(E, \tau_{ob})$ is an order bornological space.
            Moreover, $\tau_{ob}$ is the finest locally convex topology $\tau'$ for $E$ such that
            order bounded sets in $(E, \tau')$ are the same as in $(E, \tau)$.
  \item[(vi)]If $F$ is an ordered vector space and $\{(E_i, \tau_i)\}_{i\in I}$ is a family of locally solid order bornological Riesz spaces, $T_i: E_i\rightarrow F$ is a linear operator for each $i\in I$, and $F$ equipped with the final locally convex topology $\tau_f$ for $\{T_i\}_{i\in I}$ has an order bounded $\tau_f$-neighborhood of zero, then $(F, \tau_f)$ is order bornological.
\end{enumerate}
\end{theorem}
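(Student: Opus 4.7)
The six parts are united by a bridge, provided by Theorems~\ref{theorem2.1} and~\ref{theorem2.2}, between the notions ``bornivorous'' and ``order bornivorous''; most of the work lies in deploying this bridge in the correct direction under each hypothesis.

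For (i), the key is the Banach-lattice fact that every norm-null sequence admits an order bounded subsequence: if $\|y_n\|\to 0$, extract $(y_{n_k})$ with $\sum\|y_{n_k}\|<\infty$ and put $y=\sum|y_{n_k}|$, so that $|y_{n_k}|\le y$. If $B$ were an order bornivorous, balanced, convex set failing to be a neighborhood, then its gauge $g_B$ would be unbounded on the unit ball; rescaling yields a null sequence $(y_n)$ with $g_B(y_n)\to\infty$, and a subsequence lies in the order interval $[-y,y]$, contradicting the order bornivorousness of $B$. Part (ii) is immediate, since an order bornivorous barrel is in particular an order bornivorous, balanced, convex set.

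For (iii), the two halves use Theorems~\ref{theorem2.1} and~\ref{theorem2.2} in complementary ways. In a locally solid order bornological Riesz space, order bounded sets are $\tau$-bounded by Theorem~\ref{theorem2.1}, so every bornivorous set is \emph{a fortiori} order bornivorous, hence a neighborhood. Conversely, in a bornological space with an order bounded $\tau$-neighborhood, Theorem~\ref{theorem2.2} says $\tau$-bounded sets are order bounded, so every order bornivorous set is bornivorous, hence a neighborhood. For (iv), the finest locally convex topology has every balanced convex absorbing set as a zero-neighborhood, and any order bornivorous set is absorbing (each singleton is an order interval $[x,x]$), so the conclusion is immediate. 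For (v), the neighborhood-base axioms for $\tau_{ob}$ are routine: stability under finite intersection is clear, balancedness and convexity come from the definition, and $(1/2)B+(1/2)B\subseteq B$ follows from convexity; so $\tau_{ob}$ is a locally convex vector topology, and $(E,\tau_{ob})$ is order bornological by construction. For the maximality claim, any competing locally convex $\tau'$ under which each order bounded set remains topologically bounded has all $\tau'$-neighborhoods of zero order bornivorous, so passing to balanced convex sub-neighborhoods gives $\tau'\subseteq\tau_{ob}$.

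Part (vi) is the main new work: let $B\subset F$ be order bornivorous, balanced, and convex. Continuity of each $T_i\colon(E_i,\tau_i)\to(F,\tau_f)$ makes it topologically bounded, so combining Theorem~\ref{theorem2.2} in $F$ (via the order bounded $\tau_f$-neighborhood) with Theorem~\ref{theorem2.1} in $E_i$ shows that $T_i$ maps order bounded sets to order bounded sets. Hence $T_i^{-1}(B)$ is an order bornivorous, balanced, convex subset of $E_i$, and by the order bornological hypothesis on $E_i$ it is a $\tau_i$-neighborhood of zero; by the defining property of the final locally convex topology $\tau_f$, $B$ is then a $\tau_f$-neighborhood. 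The principal obstacle throughout is (i), which depends essentially on the completeness of the norm through the order-bounded-subsequence trick; the remaining parts follow rather mechanically from the bridging theorems.
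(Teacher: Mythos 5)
Your treatment of parts (ii), (iii) and (v) coincides with the paper's: (ii) is immediate, (iii) is exactly the two-directional use of Theorems~\ref{theorem2.1} and~\ref{theorem2.2}, and your verification of the neighborhood-base axioms plus the absorption argument for the maximality in (v) matches the paper's (much terser) proof. In (iv) and (vi) you argue directly where the paper argues by reduction: for (iv) the paper invokes part (iii) together with the fact that the finest locally convex topology is bornological, whereas you simply observe that an order bornivorous set is absorbing and hence automatically a neighborhood of zero in the finest locally convex topology --- a route that is shorter and does not even use the order bounded neighborhood hypothesis; for (vi) the paper passes through ``each $E_i$ is bornological, final locally convex topologies of bornological spaces are bornological, now apply (iii)'', whereas you pull an order bornivorous disk of $F$ back along each $T_i$ (using Theorem~\ref{theorem2.1} in $E_i$ and Theorem~\ref{theorem2.2} in $F$ to see that $T_i$ is order bounded) and conclude via the defining property of the final locally convex topology. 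Both of your alternatives are sound and, if anything, cleaner.

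The substantive divergence is (i). The paper's proof rests on the assertion that in a normed Riesz space every ball $\{x:\|x\|\le r\}$ is order bounded; this is false in general (the unit ball of $\ell^2$ is not order bounded), so the paper's argument does not stand as written. Your argument --- extract from a norm-null sequence a subsequence $(y_{n_k})$ with summable norms, dominate it by $y=\sum_k|y_{n_k}|$, and contradict the absorption of $[-y,y]$ --- is correct, but, as you yourself note, forming $y$ requires norm completeness, so it proves only the Banach lattice half of the claim. This restriction is not a repairable gap in your write-up but the correct scope of the result: for incomplete normed Riesz spaces the statement actually fails. For instance, in $c_{00}$ with the supremum norm the set $B=\{x:\sum_n|x_n|\le 1\}$ is balanced, convex and order bornivorous (every order interval $[-u,u]$ with $u\in c_{00}$ is supported on finitely many coordinates and hence lies in $(\sum_n u_n)B$), yet $B$ contains no supremum-norm ball. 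So your proof establishes the part of (i) that is true, by a genuinely different and sounder method than the paper's.
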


\begin{proof}
\begin{enumerate}
  \item [(i)]Let $(E, \tau)$ be a normed Riesz space, where $\tau$ is the corresponding
             norm topology. Then $\tau$ is locally-convex solid. Let $B$ be an order bornivorous, balanced and convex neighborhood of zero. Since the associated norm $||\cdot||$ is a Riesz norm, for any $r>0$ the ball $B_r=\{x\mid ||x||\leq r\}$ is order bounded. Therefore, these balls are absorbed by $B$. It follows that $B$ is a neighborhood of zero; hence, $E$ is order bornological.
  \item [(ii)] Trivial.
  \item [(iii)] In view of Theorem \ref{theorem2.1}, a bornivorous set in a locally solid Riesz space is order bornivorous. Hence, in a locally solid order bornological Riesz space every bornivorous, balanced and convex set is a neighborhood of zero. Conversely, the hypothesis and Theorem \ref{theorem2.2} imply that an order bornivorous set in $E$ is bornivorous. Hence, every order bornivorous, balanced and convex set in $E$ is a neighborhood of zero.
   \item [(iv)]This follows from (iii) and the fact that $E$ equipped with its finest locally convex topology is
               bornological (p.222 of \cite{Horvath}).
   \item [(v)]The first statement follows from the neighborhood system structure theorem for topological vector spaces (p. I.7 of \cite{Bourbaki1}) and Definition~\ref{definition3.1}. For the second statement, notice that an order bounded set $D$ in $E$ is absorbed by the same collection of balanced and convex sets in $(E, \tau_{ob})$ and $(E, \tau)$.
   \item [(vi)]The hypothesis implies that each $(E_i, \tau_i)$ is a bornological space. Therefore, $(F, \tau_f)$ is also bornological (p. III.12 of \cite{Bourbaki1}). It follows from (iii) that  $(F, \tau_f)$ is order bornological.
   \end{enumerate}
\end{proof}
\noindent \textbf{Remark 1.} The Euclidean norm on $R^2$ is not a Riesz norm when $R^2$ is equipped with the lexicograhpic ordering. Thus, Example 3.1 shows that the conclusion in (i) need not hold if the norm on a Riesz space is not a Riesz norm. A finite dimensional locally convex space is bornological (p. III.12 of \cite{Bourbaki1}); but a finite dimensional ordered locally convex space need not be order bornological, as Example 3.1 shows. Moreover, Example 3.1 shows that a metrizable locally-convex solid Riesz space (even a Fr$\acute{e}$chet lattice) need not be order bornological, although a metrizable
locally convex space is bornological (p. III.2 of \cite{Bourbaki1} or p.222 of \cite{Horvath}).\\

\noindent \textbf{Remark 2}.  The order bornological space $(E, \tau_{ob})$ in (v) is called the \emph{order bornological space associated with $(E, \tau)$}. \\

\begin{corollary}\label{corollary3.1}
Let $(E, \tau)$ be a locally solid order bornological Riesz space and $(F, \tau')$ be the inductive limit, a direct sum or a quotient space of $E$. If a partial order makes $F$ an ordered vector space such that $F$ has an order bounded $\tau'$-neighborhood of zero, then $(F, \tau')$ is order bornological.
\end{corollary}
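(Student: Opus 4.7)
The plan is to reduce the corollary directly to Theorem~\ref{theorem3.1}(iii), using it in both directions and invoking the standard permanence properties of bornological locally convex spaces.

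First, since $(E, \tau)$ is a locally solid order bornological Riesz space, the forward direction of Theorem~\ref{theorem3.1}(iii) gives that $(E, \tau)$ is bornological. Next, I would appeal to the well-known fact that the class of bornological locally convex spaces is stable under the formation of inductive limits, locally convex direct sums, and quotients (see p. III.12 of \cite{Bourbaki1}). This immediately yields that $(F, \tau')$ is bornological in each of the three cases mentioned.

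Finally, since $(F, \tau')$ is bornological by the previous step and, by hypothesis, possesses an order bounded $\tau'$-neighborhood of zero, the converse direction of Theorem~\ref{theorem3.1}(iii) applies and shows that $(F, \tau')$ is order bornological. This completes the argument.

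The main obstacle is essentially non-existent: once Theorem~\ref{theorem3.1}(iii) is in hand, the corollary is a two-line consequence combined with a citation. The only delicate point worth noting is that Theorem~\ref{theorem3.1}(iii) does not require $F$ itself to be a Riesz space or to carry a locally solid topology; the converse direction only needs an ordered locally convex space with an order bounded neighborhood of zero together with the bornological property, which is exactly what we are handed after the transfer step. Thus no additional compatibility between the order structure on $F$ and the topology $\tau'$ (beyond what is built into the notion of an ordered topological vector space) needs to be verified.
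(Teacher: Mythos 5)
Your proof is correct and follows exactly the route the paper intends: the corollary is left without an explicit proof, but it is the single-space analogue of Theorem~\ref{theorem3.1}(vi), whose argument is precisely your chain (forward direction of (iii), stability of bornological spaces under these constructions via p.~III.12 of \cite{Bourbaki1}, then the converse direction of (iii)). Your observation that the converse direction of (iii) only needs an ordered locally convex space with an order bounded neighborhood of zero, not a Riesz space, is also the right point to flag and is consistent with how the paper uses it in (vi).
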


Operators between ordered topological spaces have received much attention in recent literature; see, for instance,  \cite{EGZ}, \cite{HMZ}, \cite{Hong1}, \cite{Hong2},\cite{Zabeti1} and \cite{Zabeti2} and references therein. Next, we give some fundamental properties of order bounded operators defined on order bornological spaces.
To this aim, we first establish two lemmas. 

\begin{lemma}\label{lemma3.1}
Let $(E, \tau)$ be an ordered topological vector space and $B$ be an order bounded subset of $E$. Then the following statements hold.
\begin{enumerate}
  \item [(i)]The balanced hull $B_b$ of $B$ is order bounded.
  \item [(ii)]The convex hull $B_c$ of $B$ is order bounded.
  \item [(iii)]The convex balanced hull $B_{bc}$ of $B$ is order bounded.
\end{enumerate}
\end{lemma}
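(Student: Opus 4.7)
The plan is to show that every order-bounded set $B$ sits inside a symmetric order interval $[-u,u]$, which is automatically both convex and balanced; the three hulls then satisfy $B_c,\, B_b,\, B_{bc} \subset [-u,u]$ by elementary inclusions.

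By hypothesis $B \subset [a,b]$ for some $a \le b$ in $E$. I first symmetrize the containing interval. In the Riesz-space setting that underlies the paper one simply takes $u = |a| \vee |b|$, giving $-u \le a \le b \le u$ and hence $[a,b] \subset [-u,u]$. More generally one may enlarge $[a,b]$ so that $a \le 0 \le b$ (replace $a$ by $0$ when $a \ge 0$, noting then that $B\subset[0,b]$; similarly replace $b$ by $0$ when $b \le 0$) and set $u := b-a \ge 0$; the inequalities $-u \le a$ and $b \le u$ then reduce to $b \ge 0$ and $a \le 0$ respectively, which hold by construction.

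Next I invoke two elementary facts about order intervals. First, any order interval $[p,q]$ is convex: for $x,y \in [p,q]$ and $t \in [0,1]$, the convex combination $tx+(1-t)y$ lies between $tp+(1-t)p = p$ and $tq+(1-t)q = q$. Second, a symmetric order interval $[-u,u]$ with $u \ge 0$ is balanced: for $|\lambda| \le 1$ and $-u \le x \le u$, a short case analysis on the sign of $\lambda$ yields $-u \le \lambda x \le u$. Part (ii) is now immediate from the convexity of $[a,b]$, since $B_c \subset [a,b]$. Part (i) follows because the balanced set $[-u,u]$ contains $B$, and therefore contains $B_b$. Part (iii) follows because $[-u,u]$ is simultaneously balanced and convex and contains $B$, and therefore contains $B_{bc}$.

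The only non-formal step is the symmetrization, which is mild in the paper's setting. It is worth noting that the topology $\tau$ plays no role at all in the argument, so the lemma is in fact a purely order-theoretic statement about algebraic hulls in an ordered vector space.
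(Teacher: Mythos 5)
Your proof is correct and follows essentially the same route as the paper's: the paper likewise encloses $B$ in the symmetric interval $[-(|x|\vee|y|),\,|x|\vee|y|]$ (written there as $[(-|x|)\wedge(-|y|),\,|x|\vee|y|]$) and uses the convexity of order intervals for the convex hull, so your choice $u=|a|\vee|b|$ is literally the paper's bound. One caveat worth noting: your ``more general'' fallback of enlarging $[a,b]$ so that $a\le 0\le b$ tacitly assumes $a$ and $b$ are each comparable with $0$, which can fail outside the lattice setting --- but the lattice argument is the one the paper itself relies on (its proof of (i) also invokes $|x|$, $|y|$, $\vee$ and $\wedge$), so this does not affect the result in the intended setting.
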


\begin{proof}
\begin{enumerate}
  \item [(i)]Since $B$ is order bounded in $E$, there exists an order interval $[x, y]$ in $E$ such that
             $B\subset [x, y]$. For any $a\in B$, some algebra shows that $(-|x|)\wedge (-|y|)\leq \lambda a \leq
             |x|\vee |y|$ for all $\lambda\in F$ such that $|\lambda|\leq 1$, implying that $B_b$ is contained in the order interval$ [(-|x|)\wedge (-|y|), |x|\vee |y|]$. Hence, $B_b$ is order bounded.
  \item [(ii)]The convex hull $B_c$ of $B$ may be written as
                \[
                B_c=\left\{\sum_{i=1}^n \lambda_i a_i \mid  n\in N, \lambda_i\geq 0, \sum_{i=1}^n\lambda_i=1,
                a_i\in B \right\}.
                 \]
                Let $[x, y]$ be an order interval that contains $B$. Then for any $\sum_{i=1}^n \lambda_i a_i\in B_c$ we have
                \[
                x=\sum_{i=1}^n \lambda_i x\leq  \sum_{i=1}^n \lambda_i a_i \leq \sum_{i=1}^n \lambda_i y=y. \]
                Thus, $B_c\in [x, y]$, that is, $B_c$ is order bounded.
  \item [(iii)] This follows immediately from (i) and (ii).
\end{enumerate}
\end{proof}

\begin{lemma}\label{lemma3.2}
Let $(E_1, \tau_1)$ and $(E_2, \tau_2)$ be two ordered topological vector spaces such that $E_2$ has an order bounded $\tau_2$-neighborhood of zero. Suppose $T$ is a linear operator between $E_1$ and $E_2$. Then the following statements are equivalent.
\begin{enumerate}
  \item [(i)]$T$ is order bounded.
  \item [(ii)]$T$ carries order bounded, balanced and convex sets into order bounded, balanced and convex sets.
  \item [(iii)]The inverse image of an order bornivorous, balanced and convex set under $T$ is an order bornivorous, balanced and convex set.
\end{enumerate}
\end{lemma}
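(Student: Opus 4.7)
I would prove the equivalences (i) $\Leftrightarrow$ (ii) and (i) $\Leftrightarrow$ (iii) separately; only the second uses the hypothesis that $E_2$ has an order bounded $\tau_2$-neighborhood of zero.

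For (i) $\Leftrightarrow$ (ii): The forward implication is a one-line check --- $T$ is linear, so images of balanced (resp.\ convex) sets are balanced (resp.\ convex), and (i) gives that images of order bounded sets are order bounded. For the reverse, given an arbitrary order bounded $D \subset E_1$, its balanced convex hull $D_{bc}$ is order bounded by Lemma~\ref{lemma3.1}(iii), so (ii) applied to $D_{bc}$ gives $T(D_{bc})$ order bounded, and then $T(D) \subset T(D_{bc})$ inherits order boundedness.

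For (i) $\Rightarrow$ (iii): If $V \subset E_2$ is order bornivorous, balanced, and convex, then $T^{-1}(V)$ is balanced and convex by linearity. Given any order bounded $D \subset E_1$, $T(D)$ is order bounded by (i), so $V$ absorbs $T(D)$; pulling back yields $D \subset \lambda T^{-1}(V)$ for all $|\lambda|$ large, showing $T^{-1}(V)$ is order bornivorous.

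For (iii) $\Rightarrow$ (i): Let $U$ be an order bounded $\tau_2$-neighborhood of zero in $E_2$; by Lemma~\ref{lemma3.1}, we may replace $U$ by its balanced convex hull and assume $U$ is balanced and convex (the hull is still a neighborhood of zero and still order bounded). By Remark~2 after Definition~\ref{definition3.1}, every $\tau_2$-neighborhood of zero is order bornivorous in this setting, so $U$ is order bornivorous. Applying (iii), $T^{-1}(U)$ is order bornivorous in $E_1$; hence for any order bounded $D \subset E_1$, $T^{-1}(U)$ absorbs $D$, giving $T(D) \subset \lambda U$ for some $\lambda > 0$. Since $\lambda U$ is order bounded, so is $T(D)$. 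The main obstacle sits here: the hypothesis on $E_2$ is used twice --- once to produce an order bornivorous, balanced, and convex $\tau_2$-neighborhood to feed assumption (iii), and once to transfer order boundedness from $\lambda U$ back to $T(D)$. The other implications are formal consequences of the linearity of $T$ and the hull constructions in Lemma~\ref{lemma3.1}.
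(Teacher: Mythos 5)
Your proposal is correct and follows essentially the same route as the paper: the balanced convex hull from Lemma~\ref{lemma3.1} for the reverse of (i)$\Leftrightarrow$(ii), pulling absorption back through $T$ for the implication toward (iii), and feeding an order bounded, order bornivorous, balanced and convex neighborhood of zero in $E_2$ into (iii) for the converse (the paper routes this last step through (ii) rather than directly to (i), but the content is identical). If anything, your construction of that neighborhood as the balanced convex hull of an order bounded neighborhood of zero is slightly more careful than the paper, which simply posits such a set $V$ without exhibiting it.
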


\begin{proof}
\begin{enumerate}
  \item []$(i)\Longrightarrow (ii)$. The implication is obvious.
  \item []$(ii)\Longrightarrow (i)$. Let $B$ be an order bounded subset of $E_1$ and $B_{bc}$ be the balanced convex hull of $B$. Then $B_{bc}$ is order bounded by Lemma \ref{lemma3.1}. Since $T(B_{bc})$ is order bounded in $E_2$ and $T(B)\subset T(B_{bc})$, $T(B)$ is also order bounded.
  \item []$(ii)\Longrightarrow (iii)$. Let $A$ be an order bornivorous, balanced and convex set in $E_2$. By the hypothesis, for any order bounded subset $B$ of $E_1$, $T(B)$ is order bounded in $E_2$. Thus, $A$ absorbs $T(B)$, that is, there exists $\alpha>0$ such that $T(B)\subset \lambda A$ for all $\lambda \in F$ such that $|\lambda|\geq \alpha$, or equivalently, $B\subset \lambda T^{-1}(A)$ for all $\lambda \in F$ such that $|\lambda|\geq \alpha$. It follows that $T^{-1}(A)$ absorbs $B$; hence, $T^{-1}(A)$ is order bornivorous. Since convexity and balancedness are preserved under the inverse image of a linear operator, the conclusion follows.
  \item []$(iii)\Longrightarrow (ii)$. Let $B$ be an order bounded disk of $E_1$ and $V$ be
            an order bornivorous, order bounded, balanced and convex $\tau_2$-neighborhood of zero.
            Then $T^{-1}(V)$ is an order bornivorous, balanced and convex set in $E_1$.
            Thus, $T^{-1}(V)$ absorbs $B$, implying that there exists $\alpha>0$ such that
            $T(B)\subset \lambda V$ for all $\lambda \in F$ such that $|\lambda|\geq \alpha$.
            It follows from Theorem \ref{theorem2.2} that $T(B)$ is order bounded in $E_2$.
            Now the implication follows from the fact that convexity and balancedness are both preserved under a linear operator between two vector spaces.
\end{enumerate}
\end{proof}
\noindent \textbf{Remark.} Astute readers may have noticed that only the implication $(iii)\Longrightarrow (ii)$ requires that $E_2$ has an order bounded $\tau_2$-neighborhood of zero.\\

\begin{theorem}\label{theorem3.2}
Let  $(E_1, \tau_1)$ be an ordered locally convex space. Then the following two statements holds.
\begin{enumerate}
  \item [(i)]If $E_1$ is order bornological, then for any ordered locally convex space $(E_2, \tau_2)$ with an order bounded $\tau_2$-neighborhood of zero, each order bounded operator $T: E_1\rightarrow E_2$ is topologically continuous.
  \item [(ii)]If for any ordered locally convex space $(E_2, \tau_2)$, each order bounded operator $T: E_1\rightarrow E_2$ is topologically continuous, then every order bornivorous, balanced, convex and solid set in $E_1$ is a neighborhood of zero.
\end{enumerate}
\end{theorem}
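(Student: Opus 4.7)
For part (i), I would fix an order bounded operator $T:E_1\to E_2$ and aim to verify continuity of $T$ at zero. Since $\tau_2$ is locally convex, I can pick any balanced convex $\tau_2$-neighborhood $V$ of zero. The crucial preliminary observation is that Remark~2 following Definition~\ref{definition3.1} applies: because $E_2$ has an order bounded $\tau_2$-neighborhood of zero, every $\tau_2$-neighborhood of zero (in particular $V$) is automatically order bornivorous. This puts us exactly in the situation of Lemma~\ref{lemma3.2}, and its implication $(i)\Rightarrow(iii)$ yields that $T^{-1}(V)$ is an order bornivorous, balanced and convex subset of $E_1$. The hypothesis that $E_1$ is order bornological then forces $T^{-1}(V)$ to be a $\tau_1$-neighborhood of zero; linearity of $T$ completes the proof.

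For part (ii), the plan is to manufacture a target ordered locally convex space out of the given set $B$ itself and then invoke the hypothesis on the identity map. Given an order bornivorous, balanced, convex, solid set $B\subset E_1$, note first that every singleton is order bounded, so $B$ is absorbing; hence its Minkowski gauge $g_B$ is a seminorm (balanced + convex + absorbing). Take $E_2$ to be the vector space $E_1$ retaining its original order, but equipped with the locally convex vector topology $\tau_2$ generated by $g_B$; this makes $(E_2,\tau_2)$ an ordered locally convex space. The identity $T:(E_1,\tau_1)\to(E_2,\tau_2)$ is linear and, since the order on $E_2$ coincides with that on $E_1$, trivially order bounded. By hypothesis $T$ is topologically continuous, so $\{x:g_B(x)<1\}=T^{-1}(\{x:g_B(x)<1\})$ is a $\tau_1$-neighborhood of zero. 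Because $B$ is balanced and convex, $\{x:g_B(x)<1\}\subset B$, so $B$ itself is a $\tau_1$-neighborhood of zero.

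The main obstacle lies in part (ii), specifically in finding the right test space: one must resist the temptation to look for an intrinsic Riesz-space target and instead observe that the most economical choice is $E_1$ itself retopologised by $g_B$, so that ``order bounded'' is automatic for the identity. In part (i) the only mildly subtle point is recognising that Remark~2 is what promotes a generic balanced convex neighborhood of zero in $E_2$ to an order bornivorous set, without which Lemma~\ref{lemma3.2} could not be applied. I note that solidness of $B$ is not actually consumed in the argument sketched above, though it would additionally guarantee that $g_B$ is a Riesz seminorm and that $\ker g_B$ is an ideal, which would be relevant if one wished to pass to a Hausdorff Riesz-space quotient as the test target.
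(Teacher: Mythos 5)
Your argument is correct, and for part (i) it is exactly the paper's proof: promote a balanced convex $\tau_2$-neighborhood of zero to an order bornivorous set via Remark~2, pull it back through Lemma~\ref{lemma3.2}, and invoke the order bornological hypothesis. For part (ii) your architecture is also the paper's --- retopologize $E_1$ by the gauge $g_B$ and feed the identity map to the hypothesis --- but you handle the one nontrivial step differently, and more cleanly. The paper establishes order boundedness of the identity by showing that order bounded sets of $E_1$ are $g_B$-bounded and then applying Theorem~\ref{theorem2.2}, which requires the gauge topology to admit an order bounded neighborhood of zero (a claim the paper asserts without justification; an order bornivorous set such as $B$ need not itself be order bounded). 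You instead observe that the order structure on the target is literally unchanged, so the identity carries order bounded sets to order bounded sets tautologically; this bypasses Theorem~\ref{theorem2.2} entirely and avoids the questionable intermediate claim. Your closing remark that solidity of $B$ is never consumed is likewise correct under your streamlined argument --- the paper uses solidity only to make $g_B$ a Riesz seminorm for its detour --- so your version in fact proves the formally stronger conclusion that every order bornivorous, balanced, convex set is a neighborhood of zero.
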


\begin{proof}
\begin{enumerate}
  \item [(i)]Suppose $E_1$ is order bornological. Let $B$ be a balanced and convex neighborhood of zero in $E_2$. Then $B$ is order bornivorous. It follows from Lemma~\ref{lemma3.2} that $T^{-1}(B)$ is an order bornivorous, balanced and convex set in $E_1$, implying that  $T^{-1}(B)$ is a neighborhood of $E_1$.
            This shows that $T$ is topologically continuous.
  \item [(ii)]Let $B$ be an order bornivorous, balanced, convex and solid set in $E_1$ and $g_B$ be the gauge of $B$. Then $g_B$ is a Riesz seminorm on $E_1$. We will denote this seminormed Riesz space by $(X, \tau)$. Then $(X, \tau)$ is a locally convex-solid space with an order bounded $\tau$-neighborhood of zero. Let $A$ be an order bounded subset of $E_1$. Then there exists $\alpha>0$ such that $A\subset \lambda B$ for all $\lambda\in F$ such that $|\lambda|\geq \alpha$. Since the $\tau_1$-closure of $B$ is $\{x\mid g_B(x)\leq 1 \}$ (p.95 of \cite{Horvath}), we have $\lambda B\subset\lambda \{x\mid g_B(x)\leq 1 \}$ for all $\lambda>0$, showing that $A$ is $\tau$-bounded in $X$; hence it is order bounded by Theorem \ref{theorem2.2}. This shows that the identity operator $I: E_1\rightarrow E_2$ is order bounded. It follows from the hypothesis that $B=I^{-1}(B)$ is a neighborhood of $E_1$.
\end{enumerate}
\end{proof}

\section{Order ultrabornological spaces}
As a special class of bornological spaces, ultrabornological spaces were defined and studied in \cite{Grothendieck}. Some basic properties of ultrabornological spaces may be found in \cite{Bourbaki1}. Analogously, we now define and study order ultrabornological spaces which is a special class of order bornological spaces. \\

Let $E$ be a vector space and $B$ be a balanced and convex set in $E$ and $\langle B \rangle$ be the vector subspace generated by $B$. Then $B$ is absorbing in $\langle B \rangle$. Hence, the gauge $g_B$ is a seminorm on $\langle B\rangle$ (p. II.26 of \cite{Bourbaki1}). Following \cite{Bourbaki1}, we will use $E_B$ to denote this seminormed space.

\begin{definition}\label{definition4.1}
\emph{A balanced and convex set $B$ in a topological Riesz space $(E, \tau)$ is said to be  \emph{order infrabornivorous} if it absorbs all order bounded, balanced and convex sets $B$ such that $E_B$ is a Banach lattice.}
\end{definition}

\begin{theorem}\label{theorem4.1}
Let $(E, \tau)$ be a locally solid Riesz space and $B$ be an order bounded, balanced and convex set in $E$. Then the following statements hold. \begin{enumerate}
  \item [(i)]The gauge $g_B$ is a norm, that is, $E_B$ is a normed space. If in addition $B$ is solid in $E_B$, then $E_B$ is a normed Riesz space.
  \item [(ii)]The subspace topology for $E_B$ is weaker than the norm topology on $E_B$ generated by $g_B$.
  \item [(iii)]If in addition $E$ is Hausdorff and $B$ is a solid and semi-complete subset of $E$, then $E_B$ is a Banach lattice.
\end{enumerate}
\end{theorem}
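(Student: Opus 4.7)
The proof splits into three parts, each built on local solidity and order boundedness of $B$; Theorem~\ref{theorem2.1} is the workhorse, while semi-completeness and Hausdorffness come in only for (iii).

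For (i), the plan is to use that $B$ is balanced and order bounded to enclose it in a symmetric order interval $[-u,u]$ (take $u=|a|\vee|b|$ if $B\subset[a,b]$). If $g_B(x)=0$ then $x\in\lambda B$ for arbitrarily small $\lambda>0$, so $|x|\leq\lambda u$ for every $\lambda>0$, and the Archimedean property (standard for locally solid Riesz spaces in this context) forces $x=0$; positive homogeneity and subadditivity of $g_B$ follow from the balancedness and convexity of $B$. For the Riesz-norm claim, given $|x|\leq|y|$ with $y/\lambda\in B$ (for any $\lambda>g_B(y)$), solidity of $B$ in $E_B$ yields $x/\lambda\in B$, so $g_B(x)\leq g_B(y)$.

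For (ii), Theorem~\ref{theorem2.1} renders $B$ a $\tau$-bounded set. Given any balanced $\tau$-neighborhood $V$ of zero, choose $\alpha>0$ with $B\subset\alpha V$. If $g_B(x)<1/\alpha$, pick $\lambda<1/\alpha$ with $x\in\lambda B\subset(1/\alpha)B\subset V$, using $\mu B\subset\nu B$ for $0<\mu\leq\nu$ when $B$ is balanced. The $g_B$-ball of radius $1/\alpha$ therefore sits inside $V\cap E_B$, showing the subspace topology is weaker than the norm topology.

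For (iii), first check that $E_B$ is a Riesz subspace: solidity of $B$ applied to $b\in B$ yields $|b|\in B$, and every $x\in E_B$ has the form $\lambda b$, so $|x|=|\lambda||b|\in E_B$. Part (i) then makes $(E_B,g_B)$ a normed Riesz space. For completeness, take a $g_B$-Cauchy sequence $\{x_n\}$; $g_B$-boundedness places $\{x_n/M'\}$ inside $B$ for some $M'>0$, and (ii) makes this sequence $\tau$-Cauchy in $B$. Semi-completeness produces a $\tau$-limit $y\in B$, so $x_n\to M'y$ in $\tau$. To promote this to $g_B$-convergence, fix $\epsilon>0$ and $N$ with $x_n-x_m\in(\epsilon/2)B\subset\epsilon B$ for $n,m\geq N$; for each fixed $n\geq N$ the sequence $(x_n-x_m)_{m\geq N}$ is $\tau$-Cauchy inside $\epsilon B$, which is itself semi-complete (inherited from $B$ via the $\tau$-homeomorphism $z\mapsto\epsilon z$) and sequentially $\tau$-closed in the Hausdorff space $E$, so its $\tau$-limit $x_n-M'y$ lies in $\epsilon B$, i.e.\ $g_B(x_n-M'y)\leq\epsilon$.

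\textbf{Main obstacle.} The delicate step is the last one: upgrading $\tau$-convergence to $g_B$-convergence in (iii). This hinges on two interlocking facts—semi-completeness transfers to scalar multiples of $B$, and Hausdorffness makes the $\tau$-limit unique so the limit supplied by semi-completeness of $\epsilon B$ coincides with $x_n-M'y$. A secondary technicality is checking that $|b|\in B$ for every $b\in B$ (so that $E_B$ is in fact a Riesz subspace), without which the Riesz-norm conclusion of (i) has no Riesz space on which to act.
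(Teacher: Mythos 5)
Your proposal is correct, and it diverges from the paper's proof in two of the three parts. For (ii) you argue exactly as the paper does: local solidity plus Theorem~\ref{theorem2.1} makes $B$ $\tau$-bounded, so $\alpha B\subset V\cap E_B$ and the $g_B$-balls refine the trace of $\tau$. For (i) the paper argues topologically: given $x\neq 0$ it picks a $\tau$-neighborhood $V$ with $x\notin V$, uses $\tau$-boundedness to get $\alpha B\subset V$, and concludes $g_B(x)\geq\alpha>0$; you instead argue order-theoretically, enclosing the balanced set $B$ in a symmetric interval $[-u,u]$ so that $g_B(x)=0$ forces $|x|\leq\lambda u$ for all $\lambda>0$ and the Archimedean property kills $x$. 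Both routes quietly use a hypothesis not in the statement of (i) — the paper needs Hausdorffness to separate $x$ from $0$, you need Archimedeanness (which for locally solid spaces is guaranteed by Hausdorffness) — so neither is worse than the other, and you at least flag the assumption explicitly. For (iii) the paper simply cites Bourbaki for the fact that a semi-complete bounded disk in a Hausdorff space yields a complete normed space $E_B$, whereas you reprove it from scratch: scale the $g_B$-Cauchy sequence into $B$, pass to a $\tau$-limit by semi-completeness, and upgrade $\tau$-convergence to $g_B$-convergence by observing that $\epsilon B$ is semi-complete, hence sequentially $\tau$-closed in the Hausdorff space $E$. That last step is the standard proof of the cited Bourbaki result and is carried out correctly; your version buys self-containedness at the cost of length, and your explicit check that $E_B$ is a Riesz subspace (via $|b|\in B$ from solidity) fills a detail the paper leaves implicit.
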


\begin{proof}
\begin{enumerate}
  \item [(i)]Let $x$ be a nonzero element in $E_B$. We only need to show that $g_B(x)=0$ implies $x=0$.
             To this end, consider a neighborhood $V$ of zero in $E$. Since $E$ is locally solid, $B$ is $\tau$-bounded.
             Thus, there exists some $\alpha>0$ such that $\alpha B\subset V$.
             If $x\neq 0$, then we clearly have $x\not \in \alpha B$ which implies $g_B(x)>0$, contradicting the
             hypothesis $g_B(x)=0$. Therefore, $x=0$.
             If in addition $B$ is solid, then $g_B$ is a Riesz norm.
             Therefore, $E_B$ is a normed Riesz space.
  \item [(ii)]Since $E$ is locally solid, $B$ is $\tau$-bounded. Hence, for any $\tau$-neighborhood $V$ of zero, $\alpha B\subset V$ for some $\alpha>0$, implying that $\alpha B\subset V\cap E_B$. This shows that a neighborhood base of the norm topology is contained in a neighborhood base of the subspace topology. Therefore, the conclusion follows.
  \item [(iii)](i) shows that $E_B$ is a normed Riesz space. Since $E$ is Hausdorff and $B$ is semi-complete, $E_B$, as a normed space, is a complete (p. III.8 of \cite{Bourbaki1}).  Therefore, $E_B$ is a Banach lattice.
\end{enumerate}
\end{proof}

\begin{lemma}\label{lemma4.1}
In a Hausdorff locally convex-solid Riesz space $(E, \tau)$ a balanced and convex set $B$ is order infrabornivorous if and only if it absorbs all balanced, compact, convex and solid sets.
\end{lemma}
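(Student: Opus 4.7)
The plan is to prove the two implications of the equivalence separately.

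For the forward direction, suppose $B$ is order infrabornivorous and let $K$ be a balanced, compact, convex and solid subset of $E$. The strategy is to verify the two hypotheses of Definition \ref{definition4.1}, namely that $K$ is order bounded and that $E_K$ is a Banach lattice, so that the assumption on $B$ immediately yields absorption of $K$. Since $K$ is compact it is closed in the Hausdorff space $E$, hence semi-complete; together with solidity of $K$, Theorem \ref{theorem4.1}(iii) then gives that $E_K$ is a Banach lattice. To establish order boundedness of $K$, I would use that the lattice operations are continuous on the locally solid space $E$, so that $K_+=\{|x|:x\in K\}$ is compact; together with the semi-completeness of $K$ one can then argue that the upward directed net of finite suprema from $K_+$ is Cauchy in $E$ and converges to an upper bound $u\in E_+$, whence $K\subseteq [-u,u]$ by solidity.

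For the reverse direction, suppose $B$ absorbs every balanced, compact, convex and solid set, and let $A$ be an order bounded, balanced and convex set with $E_A$ a Banach lattice. I would argue by contradiction: if $B$ does not absorb $A$, then for each $n\in \mathbb{N}$ there exists $x_n\in A$ with $x_n\notin n^2 B$. Setting $y_n=x_n/n$, one obtains $y_n\notin nB$ while $|y_n|\le u/n$ (where $A\subseteq[-u,u]$), and local solidity of $\tau$ then forces $y_n\to 0$ in $E$. Consequently $C=\{y_n\}_{n\in \mathbb{N}}\cup\{0\}$ is compact in $E$. Forming the closed balanced convex solid hull $K$ of $C$, one checks that $K$ is balanced, convex, solid, and contained in $[-u,u]$; the hypothesis would then yield that $B$ absorbs $K$ and therefore $\{y_n\}$, contradicting $y_n\notin nB$.

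The main obstacle in both directions lies in the compactness machinery. In the forward direction, one must show that a compact solid set in a Hausdorff locally convex-solid Riesz space is order bounded, for which the Cauchy-net argument above must be made rigorous by exploiting continuity of $\vee$ and $\wedge$ together with the completeness inherited from compactness. In the reverse direction, one must show that the closed balanced convex solid hull of a convergent null sequence remains compact, combining the classical fact that the closed convex balanced hull of a compact set is compact in a semi-complete Hausdorff locally convex space with the fact that confinement within the order interval $[-u,u]$ is preserved under the solid hull operation. Theorem \ref{theorem4.1} will serve as the main technical tool for assembling the Banach lattice structure in each argument.
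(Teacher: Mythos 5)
Your two-way decomposition matches the paper's, and your forward direction starts the same way (compact $\Rightarrow$ complete, then Theorem~\ref{theorem4.1}(iii) gives that $E_K$ is a Banach lattice), but both halves of your argument have genuine gaps at exactly the points you flag as ``the main obstacle.'' In the reverse direction, the set you hand to the hypothesis must be compact \emph{and solid}, and your construction does not produce one: the solid hull of even the single point $y_1$ already contains the entire order interval $[-|y_1|,|y_1|]$, which is not compact in general (in $L^1[0,1]$ no nontrivial order interval is norm compact), so the closed balanced convex solid hull $K$ of $\{y_n\}\cup\{0\}$ need not be compact --- confinement inside $[-u,u]$ gives no compactness whatsoever. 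Moreover, even the non-solid closed convex balanced hull of a compact set is only guaranteed compact under a completeness hypothesis on $E$ that the lemma does not provide; you tacitly import ``semi-complete Hausdorff'' yourself. The paper takes a different route here: it invokes the classical localization result (Theorem 13.2.2 of \cite{NB}) that a disk absorbing all compact disks absorbs every Banach disk, where the hull construction is performed \emph{inside the Banach space} $E_A$, so completeness comes for free.

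In the forward direction you correctly observe that Definition~\ref{definition4.1} requires the compact solid disk $K$ to be \emph{order bounded} before the hypothesis on $B$ applies (a requirement the paper's own proof silently skips), but your proposed argument for it does not work: the net of finite suprema of elements of $K_+$ is not contained in $K_+$ (solidity and convexity do not close a set under $\vee$), it need not lie in any compact set, and there is no reason for it to be Cauchy. In fact the claim itself is false: in $\ell^1$ the set $K=\{x:\sum_k k|x_k|\le 1\}$ is a norm-compact, balanced, convex, solid set whose coordinatewise supremum is $(1,\tfrac12,\tfrac13,\dots)\notin\ell^1$, so $K$ is not order bounded. This is therefore not a repairable detail of your write-up; this direction has to be argued, as the paper does, directly from ``$E_K$ is a Banach lattice'' without routing through order boundedness of $K$.
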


\begin{proof}
Suppose $B$ absorbs all balanced, compact, convex and solid sets in $E$. Let $A$ be an order bounded, balanced and convex set in $E$ such that $E_A$ is a Banach lattice. Then $B$ absorbs $A$ (Theorem 13.2.2 of \cite{NB}). Conversely, let $A$ be a balanced, compact, convex and solid subset of $E$. Since a compact subset of a topological group is complete (w.r.t the relative uniformity), Theorem \ref{theorem4.1} shows that $E_A$ is a Banach lattice. Therefore, $B$ absorbs $A$.
\end{proof}

\begin{definition}\label{definition4.2}
\emph{A locally convex Riesz space $(E, \tau)$ is said to be an \emph{order ultrabornological space} if each order infrabornivorous, balanced and convex set is a neighborhood of zero.}
\end{definition}
\noindent \textbf{Remark 1.} It is clear from Definition~\ref{definition3.1} and Definition~\ref{definition4.1} that an order bornivorous set is order infrabornivorous. It follows that an order ultrabornological space is an order bornological space.\\

\noindent \textbf{Remark 2.} Let $\tau_{oub}$ denote the following collection of subsets of $E$:
\begin{equation*}
\{B \text{ is infrabornivorous, balanced and convex in $E$ and  $E_B$ is a Banach lattice}\}.
\end{equation*}
Then it is easy to verify that $\tau_{oub}$ satisfies the neighborhood system structure theorem for topological vector space and $(E, \tau_{oub})$ is an order ultrabornological space. We call $(E, \tau_{oub})$ the \emph{order ultrabornological space associated with $(E, \tau)$}. Indeed, $\tau_{oub}$ is the finest locally convex topology $\tau'$ for $E$ such that all order bounded sets $B$ with $E_B$ being a Banach lattice are the same for $(E, \tau')$ and $(E, \tau)$.\\

\begin{lemma}\label{lemma4.2}
Suppose $(E_1, \tau_1)$ and $(E_2, \tau_2)$ are two ordered topological vector spaces such that $E_2$ has an order bounded $\tau_2$-neighborhood. Let $T$ be a linear operator between $E_1$ and $E_2$. Then the following two statements are equivalent.
\begin{enumerate}
  \item [(i)]If $B$ is an order bounded, balanced and convex set in $E_1$ such that $E_B$ is a Banach lattice, then $T(B)$ is order bounded, balanced and convex set in $E_2$.
  \item [(ii)]For any order bornivorous, balanced and convex set $D$ in $E_2$ the inverse image $T^{-1}(D)$ is an order infrabornivorous, balanced and convex set in $E_1$.
\end{enumerate}
\end{lemma}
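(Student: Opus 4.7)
The plan is to exploit the symmetric reformulation of absorption, namely $T(B)\subset\lambda D$ if and only if $B\subset\lambda T^{-1}(D)$, so that each implication reduces to producing a convenient test set on the opposite side. Balancedness and convexity of both $T(B)$ and $T^{-1}(D)$ are immediate from linearity of $T$ and will not require further comment; only the order-boundedness and absorption content is substantive. The overall structure mirrors that of Lemma~\ref{lemma3.2}, and I expect the hypothesis that $E_2$ possesses an order bounded $\tau_2$-neighborhood of zero to be consumed only in the direction $(ii)\Longrightarrow(i)$.

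For $(i)\Longrightarrow(ii)$, I would fix an order bornivorous, balanced and convex set $D\subset E_2$ and, to show that $T^{-1}(D)$ is order infrabornivorous, pick an arbitrary order bounded, balanced and convex $B\subset E_1$ with $E_B$ a Banach lattice. By (i), $T(B)$ is order bounded in $E_2$, hence is absorbed by $D$: some $\alpha>0$ yields $T(B)\subset\lambda D$ for all $|\lambda|\geq\alpha$. Taking preimages gives $B\subset\lambda T^{-1}(D)$ for the same $\lambda$, so $T^{-1}(D)$ absorbs $B$, completing this direction.

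For $(ii)\Longrightarrow(i)$, I would fix such a $B$ in $E_1$ and use the hypothesis on $E_2$ to choose an order bounded $\tau_2$-neighborhood $W$ of zero, replacing it by its balanced convex hull $V=W_{bc}$. The set $V$ is still a $\tau_2$-neighborhood of zero, is balanced and convex by construction, and remains order bounded by Lemma~\ref{lemma3.1}. By Remark~2 following Definition~\ref{definition3.1}, every $\tau_2$-neighborhood of zero is order bornivorous under the standing hypothesis, so $V$ qualifies as an input to (ii); hence $T^{-1}(V)$ is order infrabornivorous in $E_1$ and, applied to $B$, gives $T(B)\subset\lambda V$ for some scalar $\lambda$. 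Since $\lambda V$ is order bounded, so is $T(B)$, and balancedness and convexity of $T(B)$ are automatic.

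The main obstacle, if any, is purely bookkeeping: one must verify that $V$ can be arranged to be simultaneously balanced, convex, order bounded, a $\tau_2$-neighborhood of zero, and order bornivorous. Each piece is supplied by a previously established result---Lemma~\ref{lemma3.1} for the order boundedness of $W_{bc}$, and Remark~2 after Definition~\ref{definition3.1} for the order bornivorous property---and exactly this construction pinpoints why the hypothesis on $E_2$ is needed only for $(ii)\Longrightarrow(i)$, paralleling the asymmetry already noted after Lemma~\ref{lemma3.2}.
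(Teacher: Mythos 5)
Your proof is correct and follows essentially the same route as the paper's: the identity $T(B)\subset\lambda D \Leftrightarrow B\subset\lambda T^{-1}(D)$ drives $(i)\Rightarrow(ii)$, and $(ii)\Rightarrow(i)$ tests $T^{-1}(V)$ against a suitable order bornivorous neighborhood of zero. The only cosmetic difference is that you arrange $V$ to be order bounded from the outset (via the balanced convex hull and Lemma~\ref{lemma3.1}), so $T(B)\subset\lambda V$ yields order boundedness directly, whereas the paper uses an arbitrary balanced convex neighborhood and implicitly passes through Theorem~\ref{theorem2.2}; your version is, if anything, slightly more explicit.
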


\begin{proof}
\begin{enumerate}
  \item [](i) $\Longrightarrow$ (ii). Let $D$ be an order bornivorous, balanced and convex set in $E_2$. Let $B$ be an order bounded, balanced and convex set in $E_1$ such that $E_B$ is a Banach lattice. The hypothesis implies $T(B)$ is an order bounded, balanced and convex set in $E_2$. Hence, $D$ absorbs $T(B)$, that is,
                there exists $\alpha>0$ such that $T(B)\subset \lambda (D)$ for all $\lambda\in F$ such that $|\lambda|\geq \alpha$. This is equivalent to saying that $B\subset \lambda T^{-1}(D)$ for all $|\lambda|\geq \alpha$. Therefore, $T^{-1}(D)$ is order infrabornivorous.

  \item [](ii) $\Longrightarrow$ (i). Let $B$ be an order bounded, balanced and convex set in $E_1$ such that $E_B$ is a Banach lattice. In view of Remark 1 of Definition \ref{definition3.1}, any balanced and convex neighborhood $V$ of zero in $E_2$ is order bornivorous. By hypothesis, $T^{-1}(V)$ absorbs $B$,
      or equivalently, there exists $\alpha>0$ such that $T(B)\subset \lambda V$ for all $\lambda\in F$ such that $|\lambda|\geq \alpha$. This shows that $T(B)$ is order bounded.
        Thus, the conclusion follows from the fact that linear operators preserve balancedness and convexity.
\end{enumerate}
\end{proof}
\noindent \textbf{Remark.} It is clear from the proof that only the sufficiency requires that $E_2$ has an order bounded $\tau_2$-neighborhood of zero.

\begin{theorem}\label{theorem4.2}
Suppose $(E_1, \tau)$ is an order ultrabornological space, $(E_2, \tau_2)$ is an ordered locally convex space having an order bounded $\tau_2$-neighborhood, $T$ is a linear operator between $E_1$ and $E_2$, and $B$ is an order bounded, balanced and convex set in $E_1$ such that $E_B$ is a Banach lattice. If
$T(B)$ is an order bounded, balanced and convex set in $E_2$ such that $E_{T(B)}$ is a Banach lattice,
then $T$ is topologically continuous (hence topologically bounded) and order bounded.
\end{theorem}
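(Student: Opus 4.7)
The plan is to reduce the conclusion to the defining property of an order ultrabornological space via Lemma~\ref{lemma4.2}. I read the hypothesis as universally quantified in $B$: for every order bounded, balanced and convex $B\subset E_1$ with $E_B$ a Banach lattice, $T(B)$ is order bounded, balanced and convex in $E_2$ with $E_{T(B)}$ a Banach lattice. This is precisely condition~(i) of Lemma~\ref{lemma4.2}, so the lemma immediately supplies condition~(ii): for every order bornivorous, balanced and convex $D\subset E_2$, the preimage $T^{-1}(D)$ is order infrabornivorous, balanced and convex in $E_1$.

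Next, I would take an arbitrary balanced and convex $\tau_2$-neighborhood $V$ of zero. Since $E_2$ has an order bounded $\tau_2$-neighborhood of zero, Remark~2 of Definition~\ref{definition3.1} makes every such $V$ order bornivorous. Specializing condition~(ii) to $D=V$ shows $T^{-1}(V)$ is order infrabornivorous, balanced and convex in $E_1$. Because $(E_1,\tau_1)$ is order ultrabornological, Definition~\ref{definition4.2} then promotes $T^{-1}(V)$ to a $\tau_1$-neighborhood of zero. Letting $V$ range over a base at zero in $E_2$ establishes topological continuity of $T$; topological boundedness follows from the remark at the end of Section~1.

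For the order-boundedness of $T$, I would take any order bounded $A\subset E_1$ and pass to its balanced convex hull $A_{bc}$, which is still order bounded by Lemma~\ref{lemma3.1}. The hypothesis directly gives order-boundedness of $T(A_{bc})$ whenever $E_{A_{bc}}$ is a Banach lattice, so the one genuine obstacle is the case in which $E_{A_{bc}}$ fails to be complete. I expect to close this gap by combining the topological boundedness of $T$ (already in hand) with Theorem~\ref{theorem2.2} applied inside $E_2$: because $E_2$ has an order bounded neighborhood of zero, a $\tau_2$-bounded subset of $E_2$ is automatically order bounded, so it is enough to verify that $T(A)$ is $\tau_2$-bounded. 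The remaining reduction -- from order-boundedness of $A$ in $E_1$ to $\tau_1$-boundedness of $A$ -- is the real sticking point, and it will likely require exploiting the solidity or compatibility of $\tau_1$ that is tacit in the hypothesis (e.g.\ via Theorem~\ref{theorem2.1}), after which Lemma~\ref{lemma3.2}(iii)$\Rightarrow$(i) packages the conclusion.
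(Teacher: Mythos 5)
Your argument for topological continuity is correct and is exactly the paper's route: read the hypothesis as quantified over all such $B$, apply Lemma~\ref{lemma4.2} (i)$\Rightarrow$(ii), note that every balanced convex $\tau_2$-neighborhood of zero is order bornivorous because $E_2$ has an order bounded $\tau_2$-neighborhood of zero, and let Definition~\ref{definition4.2} promote $T^{-1}(V)$ to a $\tau_1$-neighborhood of zero. Nothing to add there.

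The order-boundedness half, however, is not closed, and the obstacle you flag is a real one. Your chain is: $A$ order bounded in $E_1$ $\Rightarrow$ $A$ $\tau_1$-bounded $\Rightarrow$ $T(A)$ $\tau_2$-bounded (by topological boundedness of $T$) $\Rightarrow$ $T(A)$ order bounded (Theorem~\ref{theorem2.2}). The last two links are fine, but the first requires $\tau_1$ to be locally solid so that Theorem~\ref{theorem2.1} applies, and local solidity is \emph{not} ``tacit in the hypothesis'': Definition~\ref{definition4.2} only demands that $(E_1,\tau_1)$ be a locally convex Riesz space. Nor does your fallback via Lemma~\ref{lemma3.2}(iii)$\Rightarrow$(i) help, since that implication concerns order bornivorous preimages, not topological neighborhoods, and in any case presupposes what you are trying to prove. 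The paper does not attempt this reduction at all; it disposes of order-boundedness in one line by citing an external result, Theorem 2.3 of \cite{Hong1}, about operators into spaces with an order bounded neighborhood of zero. So as written your proposal establishes continuity and topological boundedness but leaves order-boundedness open; to finish you must either add local solidity of $\tau_1$ as an explicit hypothesis (after which your argument closes) or invoke the cited external theorem as the paper does.
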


\begin{proof}
The continuity of $T$ follows from Lemma~\ref{lemma4.2} and Definition~\ref{definition4.2}.
The fact that $T$ is also order bounded follows from Theorem 2.3 of \cite{Hong1}.
\end{proof}

\bibliographystyle{amsplain}

\end{document}